\theoremstyle{plain} 
\newtheorem{theorem}{Theorem}[section]
\newtheorem{prop}[theorem]{Proposition}
\newtheorem{lem}[theorem]{Lemma}
\theoremstyle{definition} 
\newtheorem{defn}[theorem]{Definition}
\newtheorem{rem}[theorem]{Remark}
\newtheorem{nota}[theorem]{Notation}
\newtheorem{example}[theorem]{Example}
\numberwithin{equation}{section}
\newcommand{\B}[1]{\mathbb{#1}}
\newcommand{\rst}[1]{\ensuremath{{\mathbin\upharpoonright}%
\raise-.5ex\hbox{$#1$}}} 
\newcommand{\norm}[2]{\left\| {#1}\right\| _{#2}}
\newcommand{\cl}[1]{{\rm cl} ( #1 )} 
\newcommand{\cc}{\subset\!\!\!\subset}
\newcommand{\tr}{{\mathcal L}}
\newcommand{\RS}{{\hat{\mathbb{C}}}}
\newcommand{\HR}{H^2(D^\infty_R)}
\newcommand{\HRnull}{H^2_0(D^\infty_R)}
\newcommand{\monster}{H^2(D_r)\oplus H^2_0(D^\infty_R)}
\newcommand{\PRplus}{\Pi^{(R)}_+}
\newcommand{\PRminus}{\Pi^{(R)}_-}
\title[Spectral structure of transfer operators]
{Spectral structure of transfer operators for expanding circle maps}
\author{Oscar F.~ Bandtlow, Wolfram Just, and Julia Slipantschuk}
\address{School of Mathematical Sciences, Queen Mary University of London, Mile End Road,
London E1 4NS, UK}
\email{o.bandtlow@qmul.ac.uk, \,w.just@qmul.ac.uk, \,j.slipantschuk@qmul.ac.uk,}
\begin{document}

\begin{abstract}
We explicitly determine the spectrum of transfer operators (acting on spaces
of holomorphic functions) associated to analytic
expanding circle maps arising from finite Blaschke products. 
This is achieved by deriving a convenient natural representation of the 
respective adjoint operators.
\end{abstract}

\maketitle

\section{Introduction}
One of the major strands of 
modern ergodic theory is to exploit the rich 
links between dynamical systems theory and 
functional analysis, making the powerful tools of the latter available for the 
benefit of understanding complex dynamical behaviour. 
In classical ergodic theory, composition operators 
occur naturally as basic objects for formulating concepts such as ergodicity or mixing
\cite{Walters2000}.
These operators, 
known in this context as Koopman operators, are the formal adjoints of transfer operators, 
the spectral data of which 
provide insight into fine statistical properties 
of the underlying dynamical systems, such as rates of mixing (see, for 
example, \cite{BoyaGora:97}).

In the literature, the term `composition operator' is mostly used to refer to
compositions with 
analytic functions mapping a disk into itself, 
a setting in which  operator-theoretic properties such as boundedness, 
compactness, and most importantly explicit spectral information are well-established
(good references are \cite{Shapiro1993} or the encyclopedia on the subject 
\cite{CowM1995}).
The purpose of the present
article is to demonstrate that in a particular analytic setting, 
the spectra of transfer operators can be deduced directly from certain 
composition operators. 

Let $\tau$ be a real analytic expanding map 
on the circle and $\{\phi_k\colon k=1,\ldots, K \}$ the set of local inverse 
branches of $\tau$. 
Then the associated transfer operator $\mathcal{L}$ (also referred to
as Ruelle-Perron-Frobenius or simply Ruelle operator),
defined by 
\begin{equation}\label{eq:L_intro} 
 (\mathcal{L} f)(z) = \sum_{k=1}^{K} \phi'_k(z)(f\circ\phi_k)(z),
\end{equation} 
preserves and acts compactly on certain spaces of holomorphic functions 
\cite{ruelle76,RUGH2002,Nr.2}.

The spectrum of $\tr$ can be understood by passing to its (Banach space) adjoint operator $\tr^*$.
This strategy has been explored in the context 
of Ruelle operators acting on the space of functions locally analytic 
on the Julia set of a rational function $R$, see 
\cite{Baladia, Levin1991, Levin1994a, Ushiki2000}; in particular, explicit expressions
for Fredholm determinants of certain Ruelle operators have been derived.
In our setting of analytic expanding circle maps, we adopt a similar approach, 
that is, we analyse the spectrum of $\tr$ by deriving a natural explicit 
representation of $\tr^*$ (Proposition \ref{prop:adj}). 
As a by-product we obtain a more conceptual proof of 
results in \cite{Nr.2}, where 
the spectrum of $\tr$ for a certain family of analytic circle maps was
determined using a block-triangular matrix representation. 

In the spirit of approaching problems in the world of real 
numbers by making recourse to complex numbers, we 
consider finite Blaschke products, a class of rational maps on 
the Riemann sphere $\hat{\mathbb{C}}=\mathbb{C}\cup\{\infty\}$ preserving
the unit circle. One of the striking features of 
Blaschke products is that they partition
the Riemann sphere into simple 
dynamically invariant regions: the unit circle, the unit disk
and the exterior disk in $\RS$. As a consequence, 
the spectrum of $\tr^*$ can be determined by studying the spectrum 
of composition operators on spaces of holomorphic
functions on these dynamically invariant regions.

Our main result (Theorem \ref{thm:main}) can be summarized as follows.
Let $B$ be a finite Blaschke product such that its restriction $\tau$ to the
unit circle $\mathbb{T} = \{z:|z|=1\}$ is expanding. Denote by
$H^2(A)$ the Hardy-Hilbert space of functions which are holomorphic on some suitable
annulus $A$ (containing $\mathbb{T}$) and square integrable on its boundary 
$\partial A$ (see Definition \ref{defn:HardyHilbert}). 
Then the transfer operator $\tr$ associated to $\tau$ is compact on $H^2(A)$, with 
spectrum
$$\sigma(\tr) = 
\left \{ \lambda(z_0) ^n : n\in \mathbb{N}_0\right \} \cup 
 \{ \overline{\lambda(z_0)} ^n : n\in \mathbb{N} \}
\cup \left \{0 \right \},$$
where $\lambda(z_0)$ is the multiplier of the unique attracting 
fixed point $z_0$ of $B$  in the unit disk.
This implies that for finite Blaschke products which give rise to analytic expanding circle maps, 
the derivative of the fixed point in the unit disk completely determines 
the spectrum of $\mathcal{L}$.

The paper is organized as follows. 
In Section~\ref{sec:Prelim}, we review basic definitions and 
facts about Hardy-Hilbert spaces on annuli. The following 
Section~\ref{sec:CircTrans} 
is devoted to analytic expanding circle maps and their corresponding 
transfer operators. 
In Section \ref{sec:Adjoint}, we explicitly derive the structure of the 
corresponding adjoint operators after having established a suitable 
representation of the dual space. This structure is then used in 
Section \ref{sec:Blaschke} in order
to obtain the spectrum of transfer operators associated to analytic expanding
circle maps arising from finite Blaschke products, 
thus proving our main result.

\section{Hardy-Hilbert spaces}\label{sec:Prelim}
Throughout this article $\hat{\mathbb{C}}=\mathbb{C}\cup\{\infty\}$ 
denotes the one point compactification of $\mathbb{C}$. For $U\subset
\RS$, we write $\cl{U}$ to denote the closure of $U$ in $\RS$. 
For $r>0$ we use 
\begin{align*}
\mathbb{T}_r&=\{z\in\mathbb{C}:|z|=r\}\,,\\
\mathbb{T}_{\hphantom{r}}&=\mathbb{T}_1
\end{align*}
to denote circles centred at $0$, and 
\begin{align*} 
D_r&=\{z\in \mathbb{C}:|z|<r\}\,,\\
D^\infty_r&=\{z\in \hat{\mathbb{C}}: |z|>r\}\,,\\
\mathbb{D}&=D_1
\end{align*}
to denote disks centred at $0$ and $\infty$. 
Given $r<1<R$ 
the symbol 
\[ A_{r,R} = \{z \in \mathbb{C}:r<|z|<R\} \]
will denote an open annulus containing the unit circle $\mathbb T$. 

We write $L^p( \mathbb{T}_r) = L^p( \mathbb{T}_r,d\theta/2\pi)$ 
with $1\leq p < \infty$ for the Banach space of \mbox{$p$-integrable} functions 
with respect to normalized one-dimensional Lebesgue measure
on $\mathbb{T}_r$.
Finally, for $U$ an open subset of $\RS$ we use 
$\operatorname{Hol}(U)$ for the space of holomorphic functions on $U$.

Hardy-Hilbert spaces on disks and annuli will provide a convenient
setting for our analysis. We briefly recall their properties in the following. 

\begin{defn}[Hardy-Hilbert spaces]\label{defn:HardyHilbert}
For $\rho>0$ and $f: \mathbb{T}_\rho \to \mathbb{C}$ 
write
\[M_\rho(f)=\int_{0}^{2\pi}|f(\rho e^{i\theta})|^2\, 
\frac{d\theta}{2\pi}.\]
Then the Hardy-Hilbert spaces on $D_r$ and $A_{r,R}$ are given by
\begin{equation*}
H^2(D_r) = \left \{f\in \operatorname{Hol}(D_r):\;
\sup_{\rho\nearrow r} M_\rho(f)<\infty \right\}, 
\end{equation*}
and 
\begin{eqnarray*}
 H^2(A_{r,R}) &=& \left\{f\in \operatorname{Hol}(A_{r,R}):\;
\sup_{\rho\nearrow R} M_\rho(f)+ 
\sup_{\rho\searrow r} M_\rho(f) <\infty \right\}.
\end{eqnarray*}
The Hardy-Hilbert space on the exterior disk $D^\infty_R$ is defined accordingly, 
that is $f\in H^2(D^\infty_R)$ if $f\in \operatorname{Hol}(D^\infty_R)$ (or,
equivalently, $f\circ\varsigma$ holomorphic on $D_{1/R}$ with $\varsigma(z)=1/z$) 
and $\sup_{\rho\searrow R} M_\rho(f) <\infty$. Finally, 
$\HRnull \subset \HR$ 
denotes the subspace of
functions vanishing at infinity.
\end{defn}

A comprehensive account of Hardy spaces over general domains is given
in the classic text \cite{Duren}. A crisp treatment of Hardy spaces on the
unit disk can be found in \cite[Chapter 17]{RudinRCA}), while a good
reference for Hardy spaces on annuli is \cite{Sarason1965}. We shall
now collect a number of results which will be useful in what follows. 
 
Any function in $H^2(U)$, where $U$ is a disk or an annulus, can be extended 
to the boundary in the following sense.
For $f\in H^2(D_r)$ there is an $f^*\in L^2(\mathbb{T}_r)$ 
such that
\begin{equation*}
 \lim_{\rho\nearrow r} f(\rho e^{i\theta}) = f^*(re^{i\theta}) \quad \text{for a.e.~$\theta$},
\end{equation*}
and analogously for $f\in \HR$. Similarly, for $f\in H^2(A_{r,R})$ there are
$f^*_1\in L^2(\mathbb{T}_r)$ and $f^*_2\in L^2(\mathbb{T}_R)$,
with $\lim_{\rho\searrow r} f(\rho e^{i\theta}) = f_1^*(re^{i\theta})$
and $\lim_{\rho\nearrow R} f(\rho e^{i\theta}) = f_2^*(Re^{i\theta})$
for a.e.~$\theta$.  
It turns out that the spaces $H^2(U)$ 
are Hilbert spaces with inner products 
\[(f,g)_{H^2(D_r)} = 
          \int_{0}^{2\pi} f^*(re^{i\theta})\overline{g^*(re^{i\theta})}\;\frac{d\theta}{2\pi}\]
and
\[(f,g)_{H^2(A_{r,R})} = \int_{0}^{2\pi} f_1^*(Re^{i\theta})\overline{g_1^*(Re^{i\theta})}\;\frac{d\theta}{2\pi}+
         \int_{0}^{2\pi}
         f^*_2(re^{i\theta})\overline{g^*_2(re^{i\theta})}\;\frac{d\theta}{2\pi}. 
\]
Similarly, for $\HR$.

\begin{nota}
In order to avoid cumbersome notation, we shall write
$f(z)$ instead of $f^*(z)$ for $z$ on the boundary of the domain. 
\end{nota}

\begin{rem}\label{rem:Basis}
It is not difficult to see that $\mathcal{E} = \{e_n: n\in
\mathbb{Z}\}$ where 
\begin{equation}\label{eq:en}
 e_n(z) = \frac{z^n}{d_n}, \quad \text{with } d_n = \sqrt{r^{2n}+R^{2n}}
\end{equation}
is an orthonormal basis for $H^2(A_{r,R})$. In particular, it follows
that 
$f\in \operatorname{Hol}(A_{r,R})$ is in $H^2(A_{r,R})$ if and only if 
$f(z)=\sum_{n=-\infty}^{\infty} c_n e_n(z)$ with
$\sum_{n=-\infty}^{\infty} |c_n|^2< \infty$, where
the coefficients are given by 
$c_n=c_n(f) = (f,e_n)_{H^2(A_{r,R})}$. Note also that  
$\norm{f}{H^2(A_{r,R})}^2 =\sum_{n=-\infty}^{\infty} |c_n|^2$.
\end{rem}


\section{Circle maps and transfer operators}\label{sec:CircTrans}
The purpose of this section is to establish compactness of transfer operators on
Hardy-Hilbert spaces for analytic expanding circle maps, defined as follows.
\begin{defn}
 We say $\tau \colon \mathbb{T} \to \mathbb{T} $ is an \textit{analytic expanding circle map}
 if 
 \begin{enumerate}[(i)]
  \item $\tau$ is analytic on $\mathbb{T}$; 
  \item $|\tau'(z)|>1$ for all $z\in \mathbb{T}$.    
 \end{enumerate}        
\end{defn}

In particular, $\tau$ is a $K$-fold covering for some $K>1$.
With a slight abuse of notation we continue to write $\tau$ for
its holomorphic extension to an open annulus $A_{r,R}$ for $r<1<R$ 
and let
\[ \mathcal A=\left \{ A_{r,R}: \tau \text{ and } 
1/\tau \text{ holomorphic on }A_{r,R}  \right \}. \]

Before proceeding to the definition of transfer operator associated to $\tau$ 
we require some more notation. Given two open subsets $U$ and $V$ of $\mathbb C$ we write 
\[ U\cc V\]  
if $\cl{U}$ is a compact subset of $V$. 

By the expansivity of $\tau$ and 
\cite[Lemma 2.2]{Nr.2}, we can choose $A_0, A'$ and $A$ in $\mathcal{A} $ 
with 
\begin{equation}\label{eq:AA'}
 A_0\cc A' \cc A \text{ and } \tau(\partial A_0)\cap \cl{A} = \emptyset.
\end{equation}

Given an analytic expanding circle map 
$\tau$, we associate with it a transfer operator $\tr$ by setting 
\begin{equation}\label{eq:ldef}
(\tr f)(z) = \sum_{k=1}^K \phi'_k(z)(f\circ\phi_k)(z) \,,
\end{equation}
where $\phi_k$ denotes the $k$-th local inverse of $\tau$.

As we shall see presently, the above choices of the annuli 
guarantee that
$\tr$ is a well-defined linear operator which maps $H^2(A)$ compactly to itself.
In order to show this, we shall employ a 
factorization argument, similar to the ones used  
in \cite{BJ08,Nr.2}. 
Let $H^\infty(A')$ be the Banach space of bounded
holomorphic functions on 
$A'$ equipped with the supremum norm. We can write $\tr = \tilde{\tr} \mathcal{J}$, 
where $\tilde{\tr}\colon H^\infty(A')\to H^2(A)$ is a
lifted transfer operator given by the same functional expression \eqref{eq:ldef}
and $\mathcal{J}\colon H^2(A)\to H^\infty(A')$ is the canonical
embedding 
\[\xymatrix{
& H^\infty(A') \ar[rd]^{\tilde{\tr}} &\\
&H^{2}(A) \ar@{^{(}->}[u]^{\mathcal{J}} \ar[r]_{\tr}  &H^2(A) }\]
We use $H^\infty(A')$ instead of $H^2(A')$ as this choice allows
for an easy proof of continuity of $\tilde{\tr}$ in Lemma \ref{lem:ltilde}.

Let $R,R'$ denote the radii of the circles forming the `exterior' boundaries, 
and $r,r'$ the radii of the circles forming the `interior' boundaries
of $A$ and $A'$, respectively, that is, $A=A_{r,R}$ and $A'=A_{r',R'}$.
\begin{lem}\label{lem:ltilde}
The transfer operator $\tilde{\tr}$ given by $\eqref{eq:ldef}$ maps $H^\infty(A')$ 
continuously to $H^2(A)$.
\end{lem}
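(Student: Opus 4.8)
The plan is to reduce the continuity of $\tilde{\tr}$ to a single uniform pointwise bound on $\cl{A}$, exploiting the fact that the supremum norm on $H^\infty(A')$ dominates point evaluations at every point of $A'$. The crucial geometric input is the choice of annuli in \eqref{eq:AA'}.

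First I would establish that each local inverse $\phi_k$ extends to a holomorphic map on a neighbourhood of $\cl{A}$ with $\phi_k(\cl{A})\subset A_0$. By expansivity $\tau$ has no critical points on the annulus, so it restricts to an unramified $K$-fold covering onto its image; since $\mathbb T\subset A_0$ maps onto $\mathbb T$, we have $\mathbb T\subset\tau(A_0)$, and as the topological boundary of the open set $\tau(A_0)$ is contained in $\tau(\partial A_0)$, the condition $\tau(\partial A_0)\cap\cl{A}=\emptyset$ together with connectedness of $\cl A$ forces $\cl{A}\subset\tau(A_0)$. Hence the $K$ branches $\phi_1,\dots,\phi_K$ are single-valued and holomorphic on a neighbourhood of $\cl A$, each mapping $\cl A$ biholomorphically onto a sub-annulus of $A_0$. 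Because $\cl A$ is compact and $A_0\cc A'$, this yields $\phi_k(\cl{A})\subset A_0\cc A'$ for every $k$.

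With this in hand, for $f\in H^\infty(A')$ each composition $f\circ\phi_k$ is holomorphic on a neighbourhood of $\cl{A}$, and since each $\phi'_k$ is holomorphic there as well, the finite sum \eqref{eq:ldef} defines a function $\tilde{\tr}f$ holomorphic on a neighbourhood of $\cl{A}$; in particular $\tilde{\tr}f\in H^2(A)$. The key estimate is the uniform bound
\[ \abs{(\tilde{\tr}f)(z)}\le\sum_{k=1}^K\abs{\phi'_k(z)}\,\abs{f(\phi_k(z))}\le KM\,\norm{f}{H^\infty(A')},\qquad z\in\cl{A}, \]
where $M=\max_k\sup_{z\in\cl{A}}\abs{\phi'_k(z)}<\infty$ and we have used $\phi_k(z)\in A'$ together with the definition of the supremum norm on $H^\infty(A')$.

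Finally I would read off continuity from the definition of the $H^2(A)$ norm. Since $\norm{g}{H^2(A)}^2=M_R(g)+M_r(g)$ and both integrands are controlled by the pointwise bound above on the bounding circles $\mathbb{T}_R$ and $\mathbb{T}_r$, one obtains
\[ \norm{\tilde{\tr}f}{H^2(A)}^2=M_R(\tilde{\tr}f)+M_r(\tilde{\tr}f)\le 2(KM)^2\,\norm{f}{H^\infty(A')}^2, \]
so $\tilde{\tr}$ is continuous with operator norm at most $\sqrt{2}\,KM$. The only genuine obstacle is the first, geometric, step: verifying that \eqref{eq:AA'} really confines every inverse branch to a compact subset of $A'$. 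Once this is secured, the rest is an immediate consequence of the boundedness built into the $H^\infty$ norm, which is precisely why $H^\infty(A')$ rather than $H^2(A')$ was chosen as the domain.
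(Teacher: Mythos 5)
Your analytic core --- a uniform sup bound for $\tilde{\tr}f$ on $\cl{A}$, followed by the observation that a function bounded and holomorphic on a neighbourhood of $\cl{A}$ lies in $H^2(A)$ with norm at most $\sqrt{2}$ times its sup norm --- is sound, and it is in essence the paper's own argument: the paper factorizes $\tilde{\tr}$ through $H^\infty(A)$, quoting \cite[Lemma~2.5]{Nr.2} for exactly the sup-norm estimate you derive by hand, and then embeds $H^\infty(A)$ into $H^2(A)$. However, the geometric step that you yourself single out as the crux is wrong as stated. For $K\geq 2$ the local inverses $\phi_1,\dots,\phi_K$ are \emph{never} single-valued on a neighbourhood of $\cl{A}$: such a neighbourhood contains $\mathbb{T}$, and a single-valued inverse branch would restrict to a continuous section $\phi\colon\mathbb{T}\to\mathbb{T}$ of the $K$-fold covering $\tau|_{\mathbb{T}}$; the image of a section of a covering map is open as well as compact, hence all of $\mathbb{T}$, making $\phi$ a homeomorphism and $\tau|_{\mathbb{T}}=\phi^{-1}$ injective, i.e.\ $K=1$. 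Your appeal to ``unramified covering'' does not yield triviality of the covering, because $A$ is not simply connected; in fact the covering swept out by the inverse branches over $A$ is the \emph{connected} $K$-fold cover of the annulus, with cyclic monodromy. Concretely, for $B(z)=z^{2}$ (Example~\ref{ex:DoublMap}) the branches are the two determinations of $\sqrt{z}$, which are exchanged upon continuation once around the annulus; there is no single-valued square root on any annulus containing $\mathbb{T}$, let alone one mapping $\cl{A}$ biholomorphically onto a sub-annulus of $A_0$. Consequently the global functions $f\circ\phi_k$ that your estimate manipulates do not exist, and your argument that $\tilde{\tr}f$ is holomorphic on a neighbourhood of $\cl{A}$ collapses.

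The gap is repairable by the standard device: the $\phi_k$ are defined and holomorphic on simply connected subsets of a neighbourhood of $\cl{A}$, analytic continuation around the annulus merely permutes the $K$ branches, and the sum in \eqref{eq:ldef} is invariant under this permutation; hence $\tilde{\tr}f$ is a well-defined, single-valued holomorphic function even though its individual terms are not. Your quantitative input then survives verbatim in local form. The correct parts of your geometric argument (openness of $\tau(A_0)$, $\partial\tau(A_0)\subset\tau(\partial A_0)$, hence $\cl{A}\subset\tau(A_0)$ by connectedness) can be supplemented by the observation that a branch value, which lies on $\mathbb{T}\subset A_0$ when the base point is on $\mathbb{T}$, can never cross $\partial A_0$ while the base point stays in $\cl{A}$, precisely because $\tau(\partial A_0)\cap\cl{A}=\emptyset$; so all $K$ branch values over $\cl{A}$ stay in a compact subset of $A_0$, and $A_0\cc A'$. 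Moreover $|\phi_k'(z)|=1/|\tau'(\phi_k(z))|$ is uniformly bounded on $\cl{A}$ by compactness and the absence of critical points over $\cl{A}$ (part of what the choice \eqref{eq:AA'} via \cite[Lemma~2.2]{Nr.2} secures). With the single-valuedness claim replaced by monodromy-invariance of the sum, your pointwise bound $|(\tilde{\tr}f)(z)|\le KM\norm{f}{H^\infty(A')}$ and the final $H^2$ estimate go through, and the lemma is proved.
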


\begin{proof}
We can factorize 
$\tilde{\mathcal{L}}$ as $\tilde{\mathcal{L}} = \hat{\mathcal{J}}\hat{\mathcal{\mathcal{L}}}$,
where $\hat{\mathcal{L}}\colon H^\infty(A')\to H^\infty(A)$, given by the functional
expression \eqref{eq:ldef}, is continuous
by \cite[Lemma~2.5]{Nr.2}, and $\hat{\mathcal{J}}\colon H^\infty(A)
\hookrightarrow H^2(A)$ is the canonical embedding. 
\end{proof}

Next, we establish compactness of $\mathcal{J}\colon H^2(A)\hookrightarrow H^\infty(A')$ given by 
\begin{equation*}
(\mathcal{J}f)(z) = f(z) \quad \text{ for } z\in A'.
\end{equation*}
Let $\{e_n : n \in \mathbb{Z} \}$ be the orthonormal basis for $H^2(A)$ given by \eqref{eq:en},
then any $f\in H^2(A)$ can be uniquely expressed as $f = \sum_{n\in \mathbb{Z}} c_n(f) e_n$. 
For $N\in \mathbb{N}$ define the finite rank operator $\mathcal{J}_N\colon H^2(A)\to H^\infty(A')$ by 
\begin{equation*}
(\mathcal{J}_Nf)(z) = \sum_{n=-N+1}^{N-1} c_n(f) e_n(z) \quad \text{ for } z\in A'.
\end{equation*}
\begin{lem}\label{lem:Jemb}
Let $\mathcal{J}$ and $\mathcal{J}_N$ be as above. Then
\begin{equation*}
\lim_{N\to \infty} \norm{\mathcal{J}-\mathcal{J}_N}{H^2(A)\to H^\infty(A')} = 0.
\end{equation*}
In particular, the embedding $\mathcal{J}$ is compact.
\end{lem}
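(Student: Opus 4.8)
The plan is to exploit the fact that the operators $\mathcal{J}_N$ are finite rank, hence compact: since the compact operators form a closed subspace of the bounded operators $H^2(A)\to H^\infty(A')$, once I establish the norm convergence $\norm{\mathcal{J}-\mathcal{J}_N}{H^2(A)\to H^\infty(A')}\to 0$, compactness of $\mathcal{J}$ follows immediately as the limit of compact operators. So the entire task reduces to proving the quantitative estimate. To that end I would fix $f\in H^2(A)$, expand it in the orthonormal basis $\mathcal{E}$ from Remark~\ref{rem:Basis}, and write the tail for $z\in A'$ as $(\mathcal{J}-\mathcal{J}_N)f(z)=\sum_{|n|\geq N}c_n(f)\,z^n/d_n$.

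The next step is to decouple the $\ell^2$-summable coefficients from a deterministic kernel. Applying Cauchy--Schwarz to the tail, the contribution of the coefficients is controlled by $\bigl(\sum_{|n|\geq N}|c_n(f)|^2\bigr)^{1/2}\leq\norm{f}{H^2(A)}$, and what remains is to bound, uniformly over $z\in A'$, the purely geometric sum $S_N(z)=\sum_{|n|\geq N}|z|^{2n}/d_n^2$. Thus the problem becomes showing $\sup_{z\in A'}S_N(z)\to 0$ as $N\to\infty$, and the operator-norm bound $\norm{\mathcal{J}-\mathcal{J}_N}{H^2(A)\to H^\infty(A')}\leq\bigl(\sup_{z\in\cl{A'}}S_N(z)\bigr)^{1/2}$ drops out.

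The decisive ingredient is the strict inclusion $A'\cc A$, which gives $r<r'<R'<R$. Using only the trivial bounds $d_n^2=r^{2n}+R^{2n}\geq R^{2n}$ and $d_n^2\geq r^{2n}$, I would split $S_N$ into its positive- and negative-index halves: for $n\geq N$ the term is at most $(|z|/R)^{2n}\leq(R'/R)^{2n}$ since $|z|\leq R'$, while for $n=-m\leq -N$ it is at most $(r/|z|)^{2m}\leq(r/r')^{2m}$ since $|z|\geq r'$. Both ratios $R'/R$ and $r/r'$ are strictly less than $1$, so $S_N(z)$ is dominated by two convergent geometric tails that are independent of $z$ and vanish as $N\to\infty$. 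The only real subtlety, and the step I expect to require care, is securing this uniformity in $z$ across the whole annulus $A'$: it is precisely the gaps $r<r'$ and $R'<R$ that make the geometric ratios subunital, so the argument would break down without the strict containment. Everything else is a routine geometric-series estimate.
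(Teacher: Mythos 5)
Your proposal is correct and follows essentially the same route as the paper: Cauchy--Schwarz against the orthonormal expansion, the bounds $d_n^2\geq R^{2n}$ and $d_n^2\geq r^{2n}$ to reduce the tail to the geometric sums with ratios $R'/R$ and $r/r'$, uniformity over $A'$ from the strict containment $A'\cc A$, and compactness of $\mathcal{J}$ as a norm limit of the finite-rank operators $\mathcal{J}_N$. No gaps.
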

\begin{proof}
For $z\in A'$, it follows by the Cauchy-Schwarz inequality that
\begin{align*}
 |(\mathcal{J}f)(z)-(\mathcal{J}_Nf)(z)| 
&\leq \left(\sum_{|n|\geq N} |c_n(f)|^2\right )^{1/2}
\left(\sum_{|n|\geq N} |e_n(z)|^2\right )^{1/2}\\
 &\leq\norm{f}{H^2(A)}
\left( \sum_{|n| \geq N} \frac{|z^n|^2}{r^{2n}+R^{2n}}\right)^{1/2}\\
&\leq \norm{f}{H^2(A)} \left( \sum_{n\geq N} \left|\frac{z}{R}\right|^{2n} + 
\sum_{n\geq N} \left|\frac{r}{z}\right|^{2n} \right )^{1/2}.
\end{align*}
Thus, 
\begin{align*}
 \norm{\mathcal{J}f-\mathcal{J}_Nf}{H^\infty(A')} 
\leq \norm{f}{H^2(A)}  \left(\left(\frac{R'}{R} \right)^{2N}\frac{1}{1-(\frac{R'}{R})^2}
+ \left(\frac{r}{r'} \right)^{2N}\frac{1}{1-(\frac{r}{r'})^2} \right)^{1/2},
\end{align*}
and the assertion follows.
\end{proof}

The factorization $\tr = \tilde{\tr}\mathcal{J}$
together with Lemmas \ref{lem:ltilde} and \ref{lem:Jemb} 
now imply the following result.
\begin{prop}\label{prop:Lcompact}
 The transfer operator $\tr\colon H^2(A)\to H^2(A)$ in \eqref{eq:ldef} is compact.
\end{prop}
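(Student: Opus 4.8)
The plan is to exploit the factorization $\tr = \tilde{\tr}\mathcal{J}$ displayed above, together with the two preceding lemmas. By Lemma~\ref{lem:ltilde} the lifted transfer operator $\tilde{\tr}\colon H^\infty(A')\to H^2(A)$ is bounded, and by Lemma~\ref{lem:Jemb} the embedding $\mathcal{J}\colon H^2(A)\to H^\infty(A')$ is compact. Since the compact operators form a two-sided ideal inside the bounded operators, the composition of a bounded operator with a compact one is again compact; applying this to $\tr = \tilde{\tr}\mathcal{J}$ immediately yields compactness of $\tr$ on $H^2(A)$.

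If one prefers a self-contained argument that does not invoke the ideal property as a black box, I would instead approximate directly. For each $N$ the operator $\tilde{\tr}\mathcal{J}_N\colon H^2(A)\to H^2(A)$ has finite rank, since $\mathcal{J}_N$ maps into the finite-dimensional span of $e_{-N+1},\dots,e_{N-1}$ and $\tilde{\tr}$ is linear. Using boundedness of $\tilde{\tr}$ one estimates
\[
\norm{\tr - \tilde{\tr}\mathcal{J}_N}{H^2(A)\to H^2(A)}
\leq \norm{\tilde{\tr}}{H^\infty(A')\to H^2(A)}\,\norm{\mathcal{J}-\mathcal{J}_N}{H^2(A)\to H^\infty(A')},
\]
which tends to $0$ as $N\to\infty$ by Lemma~\ref{lem:Jemb}. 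Thus $\tr$ is the operator-norm limit of finite-rank operators, and is therefore compact.

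There is essentially no serious obstacle remaining at this stage: all of the analytic work has already been carried out in Lemmas~\ref{lem:ltilde} and~\ref{lem:Jemb}, so the proposition is a formal consequence. The only point requiring any care is to keep track of the direction of the maps in the factorization --- namely that $\mathcal{J}$ is the compact factor landing in $H^\infty(A')$ while $\tilde{\tr}$ is the bounded factor returning to $H^2(A)$ --- so that the ideal property (or the finite-rank estimate) is applied in the correct order.
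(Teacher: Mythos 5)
Your proposal is correct and follows exactly the paper's argument: the factorization $\tr = \tilde{\tr}\mathcal{J}$ combined with Lemma~\ref{lem:ltilde} (boundedness of $\tilde{\tr}$) and Lemma~\ref{lem:Jemb} (compactness of $\mathcal{J}$), using that compact operators form an ideal. Your alternative finite-rank approximation is also valid but unnecessary, since it merely unfolds the same ideal-property argument.
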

\begin{rem}
  Closer inspection of Lemma~\ref{lem:Jemb} reveals that
  the singular values of $\mathcal{J}$ decay at an exponential
  rate. Thus $\mathcal J$ and hence $\tr$ are trace-class. In fact,
  using results from \cite{expoclass} it is possible to show that both
  the singular values and the eigenvalues of $\tr$ 
decay at an exponential rate, a property that $\tr$ shares with
  other transfer operators arising from analytic maps (see, for
  example, \cite{fried,bjadvances}). 
\end{rem}

\section{Adjoint operator} \label{sec:Adjoint}
A central step in showing our main result is to find an appropriate representation
of the dual space on which the adjoint of the transfer operator has a simple structure. 

For the remainder of this section we set $A = A_{r,R}$
and denote by $H^2(A)^*$ the strong dual of $H^2(A)$, that is, the
space of continuous linear functionals on $H^2(A)$ equipped with the
topology of uniform convergence on the unit ball. 
We will show that $H^2(A)^*$ is isomorphic to the topological direct sum
$H^2(D_r)\oplus \HRnull$, equipped with the norm 
$\norm{(h_1,h_2)}{}^2 = \norm{h_1}{H^2(D_r)}^2+\norm{h_2}{\HRnull}^2$.
Similar representations of the duals of Hardy spaces 
for multiply connected regions can be found in 
\cite[Proposition 3]{Royden1988}. The present set-up is sufficiently
simple to allow for a short proof of the representation. 

 
\begin{prop}\label{lem:iso_new}
The dual space $H^2(A)^*$ is isomorphic to $H^2(D_r)\oplus \HRnull$ with the isomorphism
given by
\begin{eqnarray*}  
  J\colon H^2(D_r)\oplus \HRnull &\rightarrow& H^2(A)^*\\ 
      (h_1, h_2) &\mapsto & l,
\end{eqnarray*}
where
\begin{equation}\label{eq:lKz}
 l(f) = \frac{1}{2\pi i} \int_{\B{T}_r} f(z)h_1(z) \;dz + 
\frac{1}{2\pi i} \int_{\B{T}_R} f(z)h_2(z) \;dz \quad (f\in H^2(A)).
\end{equation}
\end{prop}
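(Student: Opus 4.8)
The plan is to verify directly that $J$ is a bounded linear bijection with bounded inverse by testing the functional $l = J(h_1,h_2)$ against the orthonormal basis $\mathcal{E} = \{e_n\}$ of $H^2(A)$ from Remark~\ref{rem:Basis}. The starting observation is that, since $\mathcal E$ is an orthonormal basis, every bounded linear functional $l$ on $H^2(A)$ satisfies $l(f) = \sum_{n} c_n(f)\, l(e_n)$, and that membership in $H^2(A)^*$ together with the operator norm $\norm{l}{H^2(A)^*}^2 = \sum_{n\in\mathbb Z} |l(e_n)|^2$ are governed entirely by the sequence $(l(e_n))_{n\in\mathbb Z}\in\ell^2$. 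The whole proof is thus reduced to understanding how $J$ encodes the Laurent coefficients of $h_1,h_2$ into this sequence.

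First I would record the series descriptions of the two summands: $h_1\in H^2(D_r)$ iff $h_1(z)=\sum_{k\geq 0}a_k z^k$ with $\norm{h_1}{H^2(D_r)}^2=\sum_{k\geq 0}|a_k|^2 r^{2k}$, and $h_2\in \HRnull$ iff $h_2(z)=\sum_{k\geq 1}b_k z^{-k}$ with $\norm{h_2}{\HRnull}^2=\sum_{k\geq 1}|b_k|^2 R^{-2k}$. Next I would compute $l(e_m)$. Note that $\mathbb T_r$ is the inner boundary of $A$ and the boundary of $D_r$, so the first integrand in \eqref{eq:lKz} is a product of two $L^2(\mathbb T_r)$ boundary functions (similarly on $\mathbb T_R$); hence the series for $h_1,h_2$ may be integrated term by term against $e_m$. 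Using $\frac{1}{2\pi i}\int_{\mathbb T_\rho} z^j\,dz = \delta_{j,-1}$ one finds
\begin{equation*}
l(e_m) = \begin{cases} b_{m+1}/d_m & m\geq 0,\\ a_{-m-1}/d_m & m\leq -1.\end{cases}
\end{equation*}

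With these formulas the core of the argument is a term-by-term comparison of the two norms. Substituting $d_m^2 = r^{2m}+R^{2m}$ and reindexing gives
\begin{equation*}
\norm{l}{H^2(A)^*}^2 = \sum_{k\geq 0} \frac{|a_k|^2}{r^{-2(k+1)}+R^{-2(k+1)}} + \sum_{k\geq 1}\frac{|b_k|^2}{r^{2(k-1)}+R^{2(k-1)}},
\end{equation*}
and I would show that the ratio of the coefficient of $|a_k|^2$ here to its coefficient $r^{2k}$ in $\norm{h_1}{H^2(D_r)}^2$, and likewise the ratio of the coefficient of $|b_k|^2$ to $R^{-2k}$, are each bounded above and below by positive constants independent of $k$ (using $r<R$, so that the geometric factors $(r/R)^{2k}$ remain in $[0,1]$). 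This yields constants $c,C>0$ with $c\norm{(h_1,h_2)}{} \leq \norm{J(h_1,h_2)}{H^2(A)^*} \leq C\norm{(h_1,h_2)}{}$.

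The two-sided estimate does most of the work: the upper bound shows $J$ is well defined (each $l$ is bounded) and continuous, while the lower bound shows $J$ is injective with continuous inverse on its range. It remains to prove surjectivity: given $l\in H^2(A)^*$, set $a_k = d_{-k-1}\, l(e_{-k-1})$ for $k\geq 0$ and $b_k = d_{k-1}\, l(e_{k-1})$ for $k\geq 1$, assemble $h_1,h_2$ from these coefficients, and invoke the upper comparison again to conclude $h_1\in H^2(D_r)$ and $h_2\in\HRnull$ from $\sum_n |l(e_n)|^2<\infty$; by construction $J(h_1,h_2)$ and $l$ agree on the total set $\mathcal E$ and hence coincide. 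The only genuine obstacle I anticipate is bookkeeping: correctly lining up the indices in $l(e_m)$ with the Laurent coefficients of $h_1$ and $h_2$, and justifying the term-by-term integration from the $L^2$-convergence of the boundary-value expansions. Once the indexing is pinned down, the comparison of coefficients is routine.
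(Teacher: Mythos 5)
Your proposal is essentially the paper's own argument, with one logical flaw in the first step that needs repair. On the positive side: your key computation of $l(e_m)$ is exactly the paper's identity \eqref{eq:an_dn} (after splitting the single coefficient sequence used there into the $h_1$- and $h_2$-parts), and your surjectivity construction coincides with the paper's formulas \eqref{eq:h1h2}, since $l(e_n)=\overline{c_n(g)}$ for the Riesz representative $g$ of $l$; so injectivity and surjectivity go through just as in the paper. Your explicit two-sided comparison of $\sum_n|l(e_n)|^2$ with $\norm{(h_1,h_2)}{}^2$ is correct (the relevant factors are $r^{-2}+R^{-2}(r/R)^{2k}$ and $R^{-2}\bigl(1+(r/R)^{2(k-1)}\bigr)$, both bounded above and below by positive constants uniformly in $k$), and it buys something the paper leaves implicit: boundedness of $J^{-1}$ is obtained directly, rather than from bijectivity plus the open mapping theorem.

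The gap is circularity in how you establish that $l=J(h_1,h_2)$ is bounded. The identity $\norm{l}{H^2(A)^*}^2=\sum_n|l(e_n)|^2$, and likewise the expansion $l(f)=\sum_n c_n(f)\,l(e_n)$, are available only for functionals \emph{already known} to be continuous; the implication ``$(l(e_n))_n\in\ell^2\Rightarrow l$ bounded'' is false for a general everywhere-defined linear functional (such a functional can even vanish on all of $\mathcal{E}$ without being zero). So you cannot use the $\ell^2$ computation to conclude that $J$ is well defined and continuous. The fix is precisely the paper's opening move: since $\norm{f}{H^2(A)}^2$ equals the sum of the squared $L^2$ norms of the boundary values of $f$ on $\mathbb{T}_r$ and $\mathbb{T}_R$, Cauchy--Schwarz applied to \eqref{eq:lKz} gives $|l(f)|\le \bigl(r\norm{h_1}{H^2(D_r)}+R\norm{h_2}{\HRnull}\bigr)\norm{f}{H^2(A)}$ directly. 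Equivalently, this estimate shows $l(f_N)\to l(f)$ for the partial sums $f_N$ of the basis expansion of $f$, which is exactly what legitimizes $l(f)=\sum_n c_n(f)\,l(e_n)$ and hence your norm identity. With that one line inserted at the start, the rest of your argument --- the term-by-term computation of $l(e_m)$ (justified, as you say, by $L^2(\mathbb{T}_\rho)$-convergence of the boundary series), the coefficient comparison, and the surjectivity construction --- is sound.
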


\begin{proof} 
We will first show that \eqref{eq:lKz} defines a continuous functional
$l\in H^2(A)^*$ and
that $J$ is a bounded linear operator.
In order to see this note that 
for any $(h_1,h_2)\in \monster$ the linear functional $l=J(h_1,h_2)$
is bounded, since for any $f\in H^2(A)$ with $\norm{f}{H^2(A)}\leq 1$ 
\[|l(f)| \leq \left(r\norm{h_1}{H^2(D_r)} + 
R\norm{h_2}{\HRnull}\right).\]
It follows that  
\[\norm{J(h_1,h_2)}{H^2(A)^*}\leq 
\sqrt{r^2+R^2}\sqrt{\norm{h_1}{H^2(D_r)}^2+\norm{h_2}{\HRnull}^2}\] and
$\norm{J}{H^2(D_r)\oplus \HRnull \to H^2(A)^*} \leq \sqrt{r^2+R^2}$.
Hence, $J$ is well defined and bounded.


For injectivity, we suppose that 
$l=J(h_1,h_2) = 0$ and show that $h_1 = 0$ and $h_2 = 0$.
In order to see this note that any $(h_1,h_2)\in \monster$ 
can be written $h_1(z) = \sum_{n=0}^{\infty} a_n z^n$
and $h_2(z) = \sum_{n=1}^{\infty} a_{-n} z^{-n}$ with suitable 
coefficients $a_n\in \mathbb{C}$. 
Now let 
\[ \mathcal{E} = \{e_n : n \in \mathbb{Z} \}\quad \text{with}\quad e_n(z) = \frac{z^n}{d_n}\] 
denote the orthonormal basis of $H^2(A)$ given in
Remark~\ref{rem:Basis}. 
A short calculation using Lebesgue dominated convergence shows 
that 
\begin{equation}\label{eq:an_dn}
0 = (J(h_1,h_2))(e_{n}) = \frac{a_{-n-1}}{d_{n}} \quad \text{for all } 
n\in \mathbb{Z},
\end{equation}
which implies $h_1 = 0$ and $h_2 = 0$. Thus $J$ is injective.


Finally, in order to show that $J$ is surjective, fix $l \in H^2(A)^*$. 
We will construct $(h_1,h_2)\in \monster$ such that $J(h_1,h_2)=l$.

By the Riesz representation 
theorem 
there is a 
unique $g \in H^2(A)$ such that
$l(f) = (f,g)_{H^2(A)}$ for all $f\in H^2(A)$. 
Moreover, $g$ can be uniquely expressed as 
$g = \sum_{n\in \mathbb{Z}} c_n(g) e_n$.
Now define 
\begin{equation}\label{eq:h1h2}
\begin{split}
h_1(z) &= \sum_{n=0}^{\infty}{\overline{c_{-n-1}(g)}} d_{-n-1} z^{n}  
\quad \text{for } z\in D_r,\\  
h_2(z) &= \sum_{n=1}^{\infty} \overline{c_{n-1}(g)}d_{n-1}z^{-n}
\quad \; \; \text{for } z\in D^\infty_R.
\end{split} 
\end{equation}
Using $\norm{g}{H^2(A)}^2 = \sum_{n\in \mathbb{Z}} |c_n(g)|^2 < \infty$, it follows 
that $h_1 \in H^2(D_r)$ and $h_2 \in \HRnull$. 
Combining \eqref{eq:an_dn} and \eqref{eq:h1h2} we obtain 
\[(J(h_1,h_2))(e_{n}) = \frac{a_{-n-1}}{d_{n}} = \frac{\overline{c_n(g)}d_{n}}{d_{n}}
= \overline{c_n(g)} = (e_{n},g)_{H^2(A)} \]
for every $n\in \mathbb{Z}$.  
Since the above equality also holds for all finite linear combinations
of elements in $\mathcal{E}$ the continuity of $J$ implies 
\[(J(h_1,h_2))(f) = (f,g)_{H^2(A)} = l(f)\] for all $f\in H^2(A)$. 
Thus $J$ is surjective.
\qedhere
\end{proof}

\begin{rem}
The inverse $J^{-1}$ of $J$ can be obtained using the kernel 
$K_z \in H^2(A)$ defined by $K_z(w) = 1/(z-w)$ for 
$z\in \hat{\mathbb{C}}\setminus \cl{A}$. 
More precisely, $J^{-1}$ is given by $l\mapsto (h_1,h_2)$, where $h_1(z) = l(-K_z)$
for $z\in D_r$ and $h_2(z) = l(K_z)$ for $z\in D^\infty_R$.
\end{rem}

Returning to the setting of Section \ref{sec:CircTrans}, let $\tau$ be an analytic 
expanding circle map and $A=A_{r,R} \in \mathcal{A}$ an annulus satisfying \eqref{eq:AA'} 
such that the associated  transfer operator $\tr\colon H^2(A)\to H^2(A)$ is well defined 
and compact. 
Using the representation of the dual space $H^2(A)^*$ obtained in the
previous lemma, we shall shortly derive 
an explicit form for the adjoint operator of $\tr$. 

Before doing so we require some more notation. 
Define $C^{(r)}\colon H^2(D_r)\to L^2(\mathbb{T}_{r})$ by
\begin{equation}\label{eq:Crho}
(C^{(r)}h)(z)=h(\tau(z)) \quad \text{for } z\in \mathbb{T}_r\,, 
\end{equation}
and $C^{(R)}\colon H^2_0(D^\infty_R)\to L^2(\mathbb{T}_{R})$ by 
\begin{equation}\label{eq:Crho2}
(C^{(R)}h)(z)=h(\tau(z)) \quad \text{for } z\in \mathbb{T}_R\,. 
\end{equation}

It turns out that $C^{(r)}$ and $C^{(R)}$ are compact, the proof of
which relies on the following fact. 

\begin{lem}\label{lem:K}
 Let $K$ be a compact subset of a disk $D$ in $\mathbb{C}$. 
Then there exists a constant $c_K$
depending on $K$ only such that for any $f\in H^2(D)$
\[\sup_{z\in K} |f(z)|\leq c_K \norm{f}{H^2(D)}\,.\]
\end{lem}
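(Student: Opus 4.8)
The plan is to use the fact that evaluation at a point is a bounded linear functional on the Hardy space of a disk, with a bound that blows up only as the evaluation point tends to the boundary; since $K$ lies compactly inside $D$, this blow-up is controlled uniformly. I would first reduce to the case of a disk centred at the origin: we may assume without loss of generality that $D = D_\rho$ for some $\rho > 0$, the general case following by an affine change of coordinates $z \mapsto az+b$, which maps disks to disks and transports the Hardy space of $D_\rho$ boundedly onto that of $D$.

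Next I would establish the pointwise bound. The monomials $z^n/\rho^n$, $n\ge 0$, form an orthonormal basis of $H^2(D_\rho)$ (the disk analogue of Remark~\ref{rem:Basis}), so that $\norm{f}{H^2(D_\rho)}^2 = \sum_{n=0}^{\infty}|a_n|^2\rho^{2n}$ whenever $f(z)=\sum_{n=0}^{\infty} a_n z^n$. For any $z$ with $|z| < \rho$, the Cauchy--Schwarz inequality then gives
\[
|f(z)| \le \sum_{n=0}^{\infty} |a_n|\,|z|^n = \sum_{n=0}^{\infty} \left(|a_n|\rho^n\right)\left(\frac{|z|}{\rho}\right)^n \le \norm{f}{H^2(D_\rho)}\left(\sum_{n=0}^{\infty}\left(\frac{|z|}{\rho}\right)^{2n}\right)^{1/2} = \frac{\norm{f}{H^2(D_\rho)}}{\sqrt{1-(|z|/\rho)^2}}.
\]

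Finally, to pass to a bound uniform over $K$, I would observe that since $K$ is a compact subset of the open disk $D_\rho$, the quantity $s_0 := \max_{z\in K}|z|$ is attained and satisfies $s_0 < \rho$. Taking the supremum over $z\in K$ in the displayed estimate yields the claim with
\[
c_K = \frac{1}{\sqrt{1-(s_0/\rho)^2}},
\]
which depends only on $K$ (and the fixed disk $D$). The argument is essentially routine; the only genuine point is the uniformity of the constant, and this reduces entirely to the strict inequality $s_0 < \rho$, which is exactly where the hypothesis $K \cc D$ (compactness of $K$ inside the open disk) is used. An equally short alternative avoids power series: for $z\in K$ one applies the Cauchy integral formula over the circle $\mathbb{T}_s$ with $s_0 < s < \rho$, bounds the kernel $|w-z|^{-1}$ from above by $(s-s_0)^{-1}$, and then invokes Cauchy--Schwarz together with the monotonicity $M_s(f)\le \norm{f}{H^2(D_\rho)}^2$; the decisive step is again extracting a constant independent of $z\in K$, which once more rests on $s_0 < \rho$.
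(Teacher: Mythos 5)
Your proof is correct and follows exactly the route the paper indicates: the paper's proof is a one-line reference to \cite[Lem.~2.9]{BJ08} together with the remark that one can alternatively argue ``by a calculation using the Cauchy--Schwarz inequality similar to the proof of Lemma~\ref{lem:Jemb}'', and your power-series/Cauchy--Schwarz argument is precisely that calculation carried out in full, with the compactness of $K$ entering only through $s_0 < \rho$ as it should.
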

\begin{proof}
This follows, for example, from \cite[Lem.~2.9]{BJ08}, or by a
calculation using the Cauchy-Schwarz inequality similar to the proof
of Lemma~\ref{lem:Jemb}. 
\end{proof} 
We now have the following. 
\begin{lem}\label{lem:Cr_compact}
The operators $C^{(r)}$ and $C^{(R)}$ are compact.
\end{lem}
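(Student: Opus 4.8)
The plan is to prove compactness of $C^{(r)}$ and $C^{(R)}$ by a factorization argument entirely analogous to the one used for the transfer operator $\tr$ in Section~\ref{sec:CircTrans}. The key observation is that the composition $h\mapsto h\circ\tau$ does not merely map into $L^2(\B{T}_r)$, but (because $\tau$ is holomorphic on an annulus strictly containing $\B{T}$ and, importantly, maps a neighbourhood of $\cl{D_r}$ into a larger disk on which $h$ is holomorphic) actually lands in a smaller Hardy space, and it is the subsequent embedding that supplies the compactness.

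Concretely, for $C^{(r)}$ I would first choose radii $r<r''<r'$ so that $\tau(\cl{D_{r''}})\subset D_{r'}$; such a choice is possible because $\tau$ is holomorphic on $A=A_{r,R}$ with $r<1<R$ and, by continuity and the covering property, maps circles close to $\B{T}$ into bounded regions. I would then factor $C^{(r)}$ as $C^{(r)}=\iota\circ\tilde C$, where $\tilde C\colon H^2(D_{r'})\to \operatorname{Hol}(D_{r''})$ sends $h$ to the holomorphic function $h\circ\tau$ on $D_{r''}$, and $\iota$ is the canonical embedding of holomorphic functions on $D_{r''}$ into $L^2(\B{T}_r)$ (restricting to the boundary circle $\B{T}_r\cc D_{r''}$). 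Boundedness of $\tilde C$ follows from Lemma~\ref{lem:K}, which bounds $\sup_{z\in\cl{D_{r''}}}|h(\tau(z))|$ by $c_K\norm{h}{H^2(D_{r'})}$ with $K=\tau(\cl{D_{r''}})$; the embedding $\iota$ is compact by exactly the Taylor-tail estimate used in Lemma~\ref{lem:Jemb}, since $\B{T}_r$ sits strictly inside $D_{r''}$. The argument for $C^{(R)}$ is the mirror image, using $\varsigma(z)=1/z$ to transport everything to the disk and noting that functions in $\HRnull$ vanish at infinity, with the analogous choice $R'<R''<R$ so that $\tau$ maps $\cl{D^\infty_{R''}}$ into $D^\infty_{R'}$.

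The main obstacle is the geometric bookkeeping of the radii: I must verify that $\tau$, viewed as a map on the annulus, genuinely compresses a neighbourhood of $\cl{D_r}$ (respectively a neighbourhood of $\cl{D^\infty_R}$) into a \emph{strictly larger} disk on which the relevant Hardy-space functions are holomorphic, so that the factorization $C^{(r)}=\iota\circ\tilde C$ makes sense with a genuine gap between $r$ and $r''$. This is where the specific choices of annuli in \eqref{eq:AA'} and the expansivity hypothesis enter, guaranteeing that no boundary values are lost and that the intermediate domain can be chosen with the required strict inclusions. Once the radii are fixed, both boundedness of $\tilde C$ (via Lemma~\ref{lem:K}) and compactness of $\iota$ (via the geometric-series estimate of Lemma~\ref{lem:Jemb}) are routine, and compactness of the composition follows since the compact operators form an ideal.
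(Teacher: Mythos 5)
Your high-level template --- factor the composition through a compact canonical embedding, use Lemma~\ref{lem:K} for boundedness of the other factor, and treat $C^{(R)}$ by symmetry --- is indeed the paper's strategy, but your geometry is inverted, and this breaks the proof at two independent points. First, a domain mismatch: your $\tilde C$ is defined on $H^2(D_{r'})$ with $r<r''<r'$, whereas $C^{(r)}$ acts on $H^2(D_r)$. A generic $h\in H^2(D_r)$ has no holomorphic extension past $\mathbb{T}_r$, so $\iota\circ\tilde C$ is only defined on the proper (dense) subspace of functions that happen to extend to the larger disk $D_{r'}$; compactness of this restriction says nothing about $C^{(r)}$ itself. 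Your guiding premise that $\tau$ ``maps a neighbourhood of $\cl{D_r}$ into a larger disk on which $h$ is holomorphic'' cannot be repaired, because for generic $h$ there is no disk of holomorphy larger than $D_r$. Second, the expression $\tau(\cl{D_{r''}})$ is not defined: in this lemma $\tau$ is a general analytic expanding circle map (Blaschke products only enter in Section~\ref{sec:Blaschke}), and its extension is only assumed holomorphic on an annulus around $\mathbb{T}$; it may have poles or essential singularities inside $D_r$, so $h\circ\tau$ cannot be claimed holomorphic on any full disk $D_{r''}\supset \cl{D_r}$.

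What \eqref{eq:AA'} actually provides --- and what the paper's proof rests on --- is the \emph{opposite} containment: $r_0:=\sup_{z\in\mathbb{T}_r}|\tau(z)|<r$, i.e.\ $\tau$ maps the circle $\mathbb{T}_r$ strictly \emph{inward}, so that $h\circ\tau$ on $\mathbb{T}_r$ samples $h$ only on the compact set $\cl{D_{r_0}}\cc D_r$. With this, the paper puts the compact factor \emph{first}: choose $r_0<r'<r$, write $C^{(r)}=\tilde C^{(r)}\tilde{\mathcal{J}}$, where $\tilde{\mathcal{J}}\colon H^2(D_r)\hookrightarrow H^2(D_{r'})$ is the restriction to the \emph{smaller} disk (compact), and $\tilde C^{(r)}\colon H^2(D_{r'})\to L^2(\mathbb{T}_r)$ is bounded by Lemma~\ref{lem:K} applied with $K=\cl{D_{r_0}}\cc D_{r'}$. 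A ``compose first, restrict to an interior circle last'' scheme of the kind you envisage can be salvaged, but only with this corrected geometry: compose into $H^\infty(N)$, where $N$ is a thin open annulus around $\mathbb{T}_r$ on which $\tau$ is holomorphic and satisfies $\tau(\cl{N})\cc D_r$, and then use compactness of the restriction map $H^\infty(N)\to L^2(\mathbb{T}_r)$ (Montel); the indispensable input $\tau(\mathbb{T}_r)\cc D_r$ is the same and cannot be avoided. The identical correction applies to your treatment of $C^{(R)}$: what is needed there is that $\tau$ maps $\mathbb{T}_R$ compactly into $D^\infty_R$, not into a larger exterior disk.
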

\begin{proof}
The choice of $A=A_{r,R}$ in \eqref{eq:AA'} implies that 
$r_0 = \sup_{z\in \mathbb{T}_r}|\tau(z)| < r$, and we
can choose a disk $D_{r'}$ with $D_{r_0} \cc D_{r'} \cc D_r$.

Let $\tilde{C}^{(r)}:H^2(D_{r'})\to L^2(\mathbb{T}_r)$ be defined by
the functional expression as in \eqref{eq:Crho}, but now considered on 
$H^2(D_{r'})$. The operator is continuous since
\begin{align*}
 \norm{\tilde{C}^{(r)}h}{L^2(\mathbb{T}_{r})} 
\leq \sup_{z\in \tau(\mathbb{T}_{r})}|h(z)|
&\leq \sup_{z\in \cl{D_{r_0}}}|h(z)|
\leq c_K \norm{h}{H^2(D_{r'})}, 
\end{align*}
where we have used Lemma \ref{lem:K} with $K=\cl{D_{r_0}}.$
The lemma follows since we can write
$C^{(r)} = \tilde{C}^{(r)} \tilde{\mathcal{J}}$ with 
$\tilde{\mathcal{J}}\colon H^2(D_r)\hookrightarrow H^2(D_{r'})$
denoting the canonical embedding, 
which is compact (see, for example,
\cite[Lemma 2.9]{BJ08}).
The argument for $C^{(R)}$ is similar.
\end{proof}

Next, we need to define certain projection operators on  $L^2(\mathbb{T_\rho})$. 
For any $g\in L^2(\mathbb{T_\rho})$ we can write $g(z) = \sum_{n\in \mathbb{Z}} g_n z^n$,
so that $g = g_+ + g_-$ with 
$g_+(z)=\sum_{n=0}^{\infty} g_n z^n$ and  
$g_-(z)=\sum_{n=1}^{\infty} g_{-n} z^{-n}$. 
Since $\norm{g}{L^2(\mathbb{T_\rho})}^2 = 
\sum_{n=-\infty}^{\infty} |a_n|^2 \rho^{2n}<\infty,$ the functions $g_{+}$ and $g_{-}$ can
be viewed as functions in $H^2(D_\rho)$ and 
$H^2_0(D^\infty_\rho)$, respectively. Then we define the bounded projection operators 
$\Pi^{(\rho)}_+\colon L^2(\mathbb{T}_\rho)\to H^2(D_\rho)$ and
$\Pi^{(\rho)}_-\colon L^2(\mathbb{T}_\rho)\to H^2_0(D^\infty_\rho)$ by
\begin{equation}\label{eq:P+P-}
\Pi^{(\rho)}_+(g) = g_{+}\quad \text{and} \quad  \Pi^{(\rho)}_-(g) = g_{-}. 
\end{equation}

Finally, let $\tr^*\colon H^2(A)^*\to H^2(A)^*$ denote the adjoint operator of $\tr$ in 
the Banach space sense, that
is, $(\tr^*l)(f) = l(\tr f)$ for all $l\in H^2(A)^*$ and  $f\in H^2(A)$.
The following proposition provides an explicit representation $\tr'$ of $\tr^*$ via 
\[\tr' = J^{-1}\tr^*J,\]
as an operator on $\monster$ 
given by compositions of $C^{(\rho)}$, $\Pi^{(\rho)}_- $
and $\Pi^{(\rho)}_+ $ for $\rho = r, R$.
 
\begin{prop}\label{prop:adj}
Let $\tr \colon H^2(A)\to H^2(A)$ be the transfer
operator associated to an analytic expanding circle map $\tau$, 
with $A\in \mathcal{A}$ as in \eqref{eq:AA'}. 
Then the isomorphism $J$ conjugates the adjoint $\tr^*$ of $\tr$ to
\[\tr'\colon \monster \to \monster\]
given by 
\begin{equation}\label{eq:blockmatrix}
\mathcal{L}'=\left(
\begin{array}{cc}
\Pi^{(r)}_+C^{(r)}
 & \Pi^{(R)}_{+} C^{(R)} \\
\Pi^{(r)}_{-} C^{(r)} & \Pi^{(R)}_{-}C^{(R)} \\
\end{array}
\right) ,
\end{equation}
that is $\tr' = J^{-1}\tr^*J$.
\end{prop}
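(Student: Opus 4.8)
The plan is to verify the claimed block-matrix formula by a direct computation: I would fix an arbitrary $(h_1,h_2)\in\monster$, set $l=J(h_1,h_2)$, and compute $J^{-1}\tr^*J(h_1,h_2)$ by tracking what the functional $\tr^*l = l\circ\tr$ does and then reading off the two components via the inverse isomorphism. Concretely, for $f\in H^2(A)$ I would write out
\[
(\tr^* l)(f) = l(\tr f) = \frac{1}{2\pi i}\int_{\B{T}_r}(\tr f)(z)h_1(z)\,dz + \frac{1}{2\pi i}\int_{\B{T}_R}(\tr f)(z)h_2(z)\,dz,
\]
substitute the defining expression \eqref{eq:ldef} for $\tr f$, and try to move the summation over inverse branches into a change of variables on each contour.

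The key step is the change of variables $w=\tau(z)$ (equivalently $z=\phi_k(w)$) on each circle. On $\B{T}_r$ the branches $\phi_k$ parametrise the preimage $\tau^{-1}(\B{T}_r)$, and the factor $\phi_k'(z)$ in \eqref{eq:ldef} is exactly the Jacobian that turns $\sum_k \phi_k'(z)(f\circ\phi_k)(z)\,dz$ into $f(w)\,dw$ integrated over the image contour; summing over $k$ accounts for covering the image circle the appropriate number of times. After this substitution the $h_1$-integral becomes $\frac{1}{2\pi i}\int f(w)\,h_1(\tau(\phi_k(w)))\,\cdots$, i.e.\ it produces the value $h_1(\tau(\cdot))$ evaluated on a contour, which is precisely $C^{(r)}h_1$ in the sense of \eqref{eq:Crho}, and similarly the $h_2$-integral produces $C^{(R)}h_2$. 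I would then recombine: $\tr^*l$ is the functional whose ``boundary data'' is $C^{(r)}h_1 + C^{(R)}h_2$ as an element of $L^2$ on the relevant circles. Finally, applying $J^{-1}$ amounts to splitting these $L^2$-data into their holomorphic-inside and holomorphic-outside parts, which is exactly the role of the projections $\Pi^{(\rho)}_\pm$: the $H^2(D_r)$-component picks up $\Pi^{(r)}_+C^{(r)}h_1 + \Pi^{(R)}_+C^{(R)}h_2$ and the $\HRnull$-component picks up $\Pi^{(r)}_-C^{(r)}h_1 + \Pi^{(R)}_-C^{(R)}h_2$, reproducing \eqref{eq:blockmatrix}.

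The main obstacle will be making the contour manipulations rigorous, specifically justifying that $\tau$ maps $\B{T}_r$ and $\B{T}_R$ to contours on which the Cauchy-type integrals can be re-expressed, and keeping track of orientations and of the exterior-disk normalisation (the vanishing-at-infinity condition and the $1/z$ coordinate). The compactness of $C^{(r)},C^{(R)}$ from Lemma~\ref{lem:Cr_compact} and the control from \eqref{eq:AA'} (which guarantees $\tau(\B{T}_r)$ and $\tau(\B{T}_R)$ lie in the correct regions relative to $\cl{A}$) ensure the compositions $h\mapsto h\circ\tau$ land in the right $L^2$ spaces, so the projections act as claimed. A clean way to sidestep some of the delicate contour bookkeeping is to test against the orthonormal basis $\C{E}=\{e_n\}$: since $J$ and $\tr$ are continuous and $\C{E}$ is total, it suffices to verify that $(\tr^*J(h_1,h_2))(e_n) = (J\tr'(h_1,h_2))(e_n)$ for every $n\in\B{Z}$, which reduces the whole identity to matching Laurent coefficients and lets the projections $\Pi^{(\rho)}_\pm$ manifest simply as the sign of the index $n$. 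I expect this coefficient-matching route to be the most economical, with the only real work being the one change-of-variables computation that identifies the composition with $\tau$ on each circle.
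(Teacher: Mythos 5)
Your proposal follows essentially the same route as the paper's proof: unwind $(\tr^*J(h_1,h_2))(f)=(J(h_1,h_2))(\tr f)$ into contour integrals, change variables $w=\phi_k(z)$ so the transfer-operator sum becomes the composition $h\circ\tau$ (i.e.\ $C^{(r)}$, $C^{(R)}$), split with $\Pi^{(\rho)}_\pm$, and reduce to basis elements by linearity, density and continuity --- exactly what the paper does, testing against the monomial basis of $\monster$ and the basis $\mathcal{E}$ of $H^2(A)$. The contour bookkeeping you flag as the main obstacle is resolved in the paper by first shifting each contour to the unit circle $\mathbb{T}$, where $\bigcup_k\phi_k(\mathbb{T})=\mathbb{T}$ up to measure zero, and by observing that the $\Pi^{(r)}_-$ part of the resulting integrand is holomorphic on $A$, so its contour can be moved from $\mathbb{T}_r$ to $\mathbb{T}_R$ to match the form of $J$.
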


\begin{proof} 
We want to show that 
$\tr^*J = J \tr'$, that is, 
\begin{equation}\label{eq:LJJL}
(\tr^*J(h_1,h_2))(f) = (J\tr'(h_1,h_2))(f)
\end{equation}
for all $(h_1,h_2)\in \monster$  and $f\in H^2(A)$. For any such $(h_1,h_2)$ and $f$, 
the adjoint property yields
\begin{eqnarray*}
(\tr^*J(h_1,h_2))(f) &=& (J(h_1,h_2))(\tr f) \\
&=& \frac{1}{2\pi i}\int_{\mathbb{T}_r} (\mathcal{L}f)(z) h_1(z) \,dz + 
 \frac{1}{2\pi i} \int_{\mathbb{T}_R} (\mathcal{L}f)(z)h_2(z) \,dz \ . 
\end{eqnarray*}

Now let a basis for $\monster$ be given by 
$\mathcal{P} = \{(p_n, 0): n\in \mathbb{N}_0\}\cup\{(0,p_{-n}): n\in \mathbb{N}\}$
with $p_n(z) = z^n$, where $p_n\in H^2(D_r)$ if $n\geq 0$ and $p_n\in H^2_0(D^\infty_R)$ if
$n<0$. 

Take $f \in \mathcal{E}$, where $\mathcal{E}$ 
is the basis for $H^2(A)$ given by \eqref{eq:en}.
For $n\in \mathbb{N}_0$ and $(h_1,h_2) = (p_n,0)\in \mathcal{P}$ we get
\begin{eqnarray*}
(\tr^*J(h_1,0))(f) &=& \frac{1}{2\pi i}\int_{\mathbb{T}_r} (\mathcal{L}f)(z) h_1(z) \,dz \\
&\stackrel{(a)}{=}& \frac{1}{2\pi i}\int_{\mathbb{T}} (\mathcal{L}f)(z) h_1(z) \,dz\\
 &=& \frac{1}{2\pi i} \sum_{k=1}^{K} \int_{\mathbb{T}} \phi'_k(z)(f\circ \phi_k)(z)  h_1(z) \,dz \\
 &\stackrel{(b)}{=}& \frac{1}{2\pi i} \sum_{k=1}^{K} \int_{\phi_k(\mathbb{T})} f(w) (h_1 \circ \tau)(w) \,dw \\
&\stackrel{(c)}{=}& \frac{1}{2\pi i} \int_{\mathbb{T}} f(w) (h_1 \circ \tau)(w) \,dw\\
&\stackrel{(d)}{=}& \frac{1}{2\pi i} \int_{\mathbb{T}_r} f(w) (h_1 \circ \tau)(w) \,dw\,.
\end{eqnarray*}
Here, equalities $(a)$ and $(d)$ follow since the integrands are analytic on
$A$, equality 
$(b)$ follows by change of variables with $w = \phi_k(z)$ and
$\tau(w)=z$, and equality
$(c)$ is a consequence of 
the fact that $\bigcup_k \phi_k(\mathbb{T}) = \mathbb{T}$ up to measure
zero.
Then, by the definition of $\Pi^{(r)}_+$ and $\Pi^{(r)}_-$,
\begin{align*}
(\tr^*J&(h_1,0))(f) \\
=&  
\frac{1}{2\pi i}\int_{\mathbb{T}_r} f(w) (\Pi^{(r)}_+(h_1\circ \tau))(w) \,dw +
\frac{1}{2\pi i}\int_{\mathbb{T}_r} f(w) (\Pi^{(r)}_-(h_1\circ \tau))(w) \,dw \\
=& 
\frac{1}{2\pi i}\int_{\mathbb{T}_r} f(w) (\Pi^{(r)}_+(h_1\circ \tau))(w) \,dw +
\frac{1}{2\pi i}\int_{\mathbb{T}_R} f(w) (\Pi^{(r)}_-(h_1\circ \tau))(w) \,dw \\
=& 
(J\tr'(h_1,0))(f). 
\end{align*}
The penultimate equality follows 
from the fact that 
$\Pi^{(r)}_-(h_1\circ \tau) \in H_0^2(D^\infty_r)$.

Analogously, for $n\in \mathbb{N}$ and $(h_1,h_2) = (0,p_{-n})\in \mathcal{P}$, the same
argument 
shows
\[(\tr^*J(0,h_2))(f) = (J\tr'(0,h_2))(f).\]
Hence, for $f\in \mathcal{E}$, by linearity \eqref{eq:LJJL} holds for all finite linear 
combinations of basis elements $(h_1,h_2)$ in $\mathcal{P}$. Since these form a dense subspace of $\monster$, and
$\tr^*$, $\tr'$ and $J$ are continuous operators, equality \eqref{eq:LJJL} holds for 
all $(h_1,h_2)\in \monster$ and  $f\in \mathcal{E}$.
By continuity, this extends to all 
$f\in H^2(A)$, which completes the proof.
\end{proof}

\begin{rem}
 Lemma \ref{lem:Cr_compact} and continuity of the projection 
 operators in \eqref{eq:P+P-} imply that $\tr'$ is compact. Note, however, 
 that this also follows from compactness of $\tr$ guaranteed by the choice
 of $A$ in \eqref{eq:AA'}.
\end{rem}

\section{Spectrum for Blaschke products}\label{sec:Blaschke}

Having discussed transfer operators $\mathcal{L}$ 
associated with
analytic expanding circle maps and a convenient 
representation of the 
corresponding adjoint operators (Proposition \ref{prop:adj}), 
we shall now use this 
representation 
to obtain the full spectrum of 
$\mathcal{L}$ for 
finite Blaschke products,
a class of circle maps defined as follows. 
\begin{defn}
For $n\geq2 $, let $\{a_1,\ldots,a_n\}$ be a 
finite set of complex numbers in the open unit disk $\mathbb{D}$.
 A \textit{finite Blaschke product} is a map of the form
$$ B(z)= C\prod_{i=1}^{n} \frac{z-a_i}{1-\overline{a_i}z}, $$
where $|C|=1.$
\end{defn}
It follows from the definition that
\begin{enumerate}[(i)]
 \item $B$ is a meromorphic function on $\hat{\mathbb{C}}$ with zeros $a_i$ and 
 poles $1/\overline{a}_i$;
  \item $B$ is holomorphic on a neighbourhood of $\overline{\mathbb{D}}$ 
  with  $B(\mathbb{D})=\mathbb{D}$ and $B(\mathbb{T})=\mathbb{T}$.
\end{enumerate}
Note also that a function  
$f$ is holomorphic on an open neighbourhood of $\overline{\mathbb{D}}$ 
with  $f(\mathbb{T})=\mathbb{T}$ if and only if $f$ is a finite
Blaschke product (see, for example, \cite[Exercise 6.12]{Burckel}). 

Let $\tau\colon\mathbb{T}\to\mathbb{T}$ denote the restriction of a 
finite Blaschke
product $B$ to $\mathbb{T}$.  A short calculation shows that 
$\tau$ is expanding if $\sum_{i=1}^{n}(1-|a_i|)/(1+|a_i|)>1$ (see 
\cite[Corollary to Prop.~1]{Martin1983} for details). 
Expansiveness of $\tau$ can also be expressed in terms of the nature
of the fixed points of $B$, as the following result shows. 
\begin{prop}\label{prop:BlaschkeProp}
Let $B$ and $\tau$ be as above. Then the following conditions are equivalent.
\begin{enumerate}[(a)]
  \item $|\tau'(z)|>1$ for all $z\in \mathbb{T}$. 
  \item 
  $B$ has exactly $n-1$ fixed points on $\mathbb{T}$, which are repelling, and
        two fixed points $z_0\in \mathbb{D}$ and  
        $\hat{z}_0=1/\overline{z}_0 \in 
        \hat{\mathbb{C}}\setminus\overline{\mathbb{D}}$,  
        which are attracting.   
 \end{enumerate}
\end{prop}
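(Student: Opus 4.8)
The plan is to run both implications off three shared ingredients. First, $B$ is a rational map of degree $n$, so it has exactly $n+1$ fixed points on $\hat{\mathbb{C}}$, counted with multiplicity. Second, the identity $B(1/\bar z)=1/\overline{B(z)}$ (valid because $|C|=1$) says that the reflection $\sigma(z)=1/\bar z$ commutes with $B$; consequently $\sigma$ permutes the fixed-point set, interchanging those in $\mathbb{D}$ with those in $\hat{\mathbb{C}}\setminus\overline{\mathbb{D}}$, fixing those on $\mathbb{T}$, and carrying the multiplier $\lambda$ at a fixed point $w$ to the multiplier $\overline{\lambda}$ at $\sigma(w)$ (equal moduli). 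Third, a direct computation of $B'/B$ on $\mathbb{T}$ gives, for $z\in\mathbb{T}$,
\[ |\tau'(z)| \;=\; \sum_{i=1}^{n}\frac{1-|a_i|^2}{|z-a_i|^2}\;=:\;P(z)\;>\;0, \]
a positive real number; in particular the multiplier at a fixed point on $\mathbb{T}$ equals $P(z)$, so such a fixed point is repelling precisely when $P(z)>1$.

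To prove (a) $\Rightarrow$ (b) I would pass to a lift $\psi$ of $\tau$ with $\psi(\theta+2\pi)=\psi(\theta)+2\pi n$ and $\psi'(\theta)=P(e^{i\theta})$. Setting $g(\theta)=\psi(\theta)-\theta$ we have $g(\theta+2\pi)=g(\theta)+2\pi(n-1)$, while hypothesis (a) gives $g'=P-1>0$. Being continuous and strictly increasing with net increment $2\pi(n-1)$ per period, $g$ meets the set $2\pi\mathbb{Z}$ in exactly $n-1$ points per period; these are precisely the fixed points of $\tau$ on $\mathbb{T}$, each with multiplier $P>1$, hence repelling and simple. They account for $n-1$ of the $n+1$ fixed points, so exactly two (with multiplicity) lie off $\mathbb{T}$. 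Since $\sigma$ has no fixed point off $\mathbb{T}$ yet permutes the fixed-point set, these two are distinct and swapped by $\sigma$, so one of them, $z_0$, lies in $\mathbb{D}$ and the other in $\hat{\mathbb{C}}\setminus\overline{\mathbb{D}}$. As a non-automorphism self-map of $\mathbb{D}$, $B$ strictly contracts the hyperbolic metric, whence $|B'(z_0)|<1$ and $z_0$ is attracting (Schwarz--Pick); by the symmetry its partner $\hat z_0=1/\overline{z_0}$ is attracting with multiplier $\overline{\lambda(z_0)}$. This establishes (b).

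For the converse (b) $\Rightarrow$ (a) I would start from the attracting fixed point $z_0\in\mathbb{D}$, which is then the Denjoy--Wolff point of $B\vert_{\mathbb{D}}$; hence every orbit in $\mathbb{D}$ (and, via $\sigma$, in $\hat{\mathbb{C}}\setminus\overline{\mathbb{D}}$) is attracted to a fixed point, so $\mathbb{D}$ and $\hat{\mathbb{C}}\setminus\overline{\mathbb{D}}$ lie in the Fatou set, $\mathbb{T}$ is the Julia set, and the listed boundary fixed points are repelling. The remaining task is to upgrade this to the pointwise statement $P(z)>1$ for \emph{every} $z\in\mathbb{T}$, equivalently $g'>0$ everywhere. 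I expect this to be the main obstacle: the fixed-point data constrain $P$ only at the $n-1$ fixed points and in the averaged form $\int_0^{2\pi}(P-1)\,d\theta=2\pi(n-1)$, neither of which on its own rules out a dip $P(z)<1$ at non-fixed points of $\mathbb{T}$. Closing this gap is the crux of the converse, and would require extracting extra rigidity from the Blaschke structure—for instance controlling the sign of $g'=P-1$ between consecutive fixed points—rather than relying on the fixed-point count alone.
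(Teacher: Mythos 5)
Your proof of the implication (a) $\Rightarrow$ (b) is correct and complete, and it is worth noting that it already gives more than the paper itself, whose ``proof'' of Proposition \ref{prop:BlaschkeProp} consists of citations to \cite{PUJALS} and \cite{Tischler1999}. All your ingredients are sound: the identity $|\tau'(z)|=\sum_{i=1}^n(1-|a_i|^2)/|z-a_i|^2=P(z)$ on $\mathbb{T}$ (so circle fixed points have positive real multiplier $P$), the strictly increasing displacement function $g$ of the lift producing exactly $n-1$ simple repelling fixed points on $\mathbb{T}$, the count of $n+1$ fixed points of a degree-$n$ rational map, the symmetry $B\circ\sigma=\sigma\circ B$ with $\sigma(z)=1/\bar{z}$ forcing the two remaining fixed points to be a $\sigma$-pair with one member in $\mathbb{D}$, and Schwarz--Pick giving $|B'(z_0)|<1$.

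The gap you flag in (b) $\Rightarrow$ (a) is genuine, and in fact it cannot be closed: with the paper's pointwise definition of expanding, (b) does \emph{not} imply (a), so the dip $P<1$ you worry about really does occur. Take $s(z)=z^2$, let $M(z)=(2z-1)/(2-z)$ be the disk automorphism sending $0$ to $-1/2$, and set $B=M\circ s\circ M^{-1}$; explicitly
\begin{equation*}
B(z)=\frac{7z^2+4z-2}{-2z^2+4z+7}=\frac{(z-a_1)(z-a_2)}{(1-a_1z)(1-a_2z)},
\qquad a_1=\frac{3\sqrt{2}-2}{7},\quad a_2=-\frac{3\sqrt{2}+2}{7},
\end{equation*}
a finite Blaschke product of degree $n=2$. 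Its fixed points are $-1/2\in\mathbb{D}$ and $-2=1/\overline{(-1/2)}$, both superattracting (multiplier $0$), and $1\in\mathbb{T}$ with multiplier $s'(1)=2$ (multipliers are conjugation-invariant); thus (b) holds with exactly $n-1=1$ repelling fixed point on $\mathbb{T}$. Yet, since the $a_i$ are real, $|i-a_i|^2=1+a_i^2$, and
\begin{equation*}
|\tau'(i)|=P(i)=\frac{1-a_1^2}{1+a_1^2}+\frac{1-a_2^2}{1+a_2^2}=\frac{4410}{4753}<1,
\end{equation*}
so (a) fails. This is exactly the failure of pointwise expansion to be invariant under conjugation, which is why no amount of rigidity extracted from the fixed-point data alone can rescue the argument. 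The equivalence asserted in Proposition \ref{prop:BlaschkeProp} is only correct if (a) is read in the conjugacy-invariant sense relevant in the cited references: some iterate of $\tau$ has derivative of modulus greater than $1$ everywhere, equivalently $\tau$ is expanding in an adapted metric; this is what hyperbolicity of $B$ under (b) (Julia set equal to $\mathbb{T}$, all critical orbits attracted to $z_0$ or $\hat{z}_0$) actually delivers. Note finally that the paper only ever invokes the direction (a) $\Rightarrow$ (b), in the proof of Theorem \ref{thm:main}, and that is the direction you have proved; so neither the paper's main theorem nor your completed half is affected.
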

\begin{proof}
See \cite[Prop. 2.1]{PUJALS} and \cite{Tischler1999}.
\end{proof}

Crucial for the proof of our main theorem is the notion of  
a composition operator, which we briefly recall.   
\begin{defn}
Let $U$ be an open region in $\hat{\mathbb{C}}$. 
If $\psi\colon U\to U$ is holomorphic, then 
$C_{\psi}\colon \operatorname{Hol}(U)\to \operatorname{Hol}(U)$ defined by $C_{\psi}f = f\circ\psi$ 
is called a \textit{composition operator (with symbol $\psi$)}. 
\end{defn}
Note that in the 
literature the term `composition operator' is mostly used in the context of 
holomorphic functions. The operators in~\eqref{eq:Crho} 
considered on $L^2(\mathbb{T}_r)$ do not formally fall into this
category, 
but will turn out to be composition
operators for Blaschke product symbols. 

We are now able to state our main result.

\begin{theorem}\label{thm:main}
Let $B$ be a finite Blaschke product such that 
$\tau = B\vert_{\mathbb{T}}$ 
is an analytic expanding circle map.
Then
\begin{enumerate}[(a)]
 \item the transfer operator $\mathcal{L}\colon H^2(A) \to H^2(A)$ associated with 
$\tau$ is well defined and compact for some annulus $A\in \mathcal{A}$, and
\item 
the spectrum of $\mathcal{L}\colon H^2(A) \to H^2(A)$ is given by
        \begin{equation}\label{eq:specL}
\sigma(\mathcal{L}) =  
\left \{ \lambda(z_0)^n: n\in\mathbb{N}_0 \right \} \cup 
\left \{\lambda (\hat{z}_0) ^n : n\in\mathbb{N}\right \} 
\cup \left \{0 \right \},        
        \end{equation}
where $\lambda(z_0)$ and $\lambda(\hat{z_0}) = \overline{\lambda(z_0)}$  are the multipliers\footnote{
Recall that the multiplier $\lambda(z^*)$ of a fixed point $z^*$ of 
a rational map 
$R$ is given by $R'(z^*)$ if $z^*\in \mathbb{C}$ and $1/R'(z^*)$ 
if $z^* = \infty$. 
For Blaschke products the 
equality $\lambda(\hat{z_0}) = \overline{\lambda(z_0)}$
follows from a straightforward calculation.} 
of the
unique fixed points $z_0$ and $\hat{z_0}$ of $B$ in 
$\mathbb{D}$ and 
$\hat{\mathbb{C}}\setminus\overline{\mathbb{D}}$, respectively.
\end{enumerate}
\end{theorem}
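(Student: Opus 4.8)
The plan is to reduce the spectral computation entirely to the adjoint representation of Proposition \ref{prop:adj} and then exploit the dynamical invariance of the disk and exterior disk under the Blaschke product. Part (a) is immediate: since $\tau = B|_{\mathbb T}$ is by hypothesis an analytic expanding circle map, the entire construction of Section \ref{sec:CircTrans} applies, so I may fix an annulus $A = A_{r,R}\in\mathcal A$ satisfying \eqref{eq:AA'}, and Proposition \ref{prop:Lcompact} then gives that $\mathcal L\colon H^2(A)\to H^2(A)$ is well defined and compact. For part (b), since $\sigma(\mathcal L) = \sigma(\mathcal L^*)$ for the Banach-space adjoint and $\mathcal L' = J^{-1}\mathcal L^* J$ is similar to $\mathcal L^*$, it suffices to compute $\sigma(\mathcal L')$ for the block operator in \eqref{eq:blockmatrix}.

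The first key step is to observe that, for a Blaschke product, the lower-left block $\Prminus C^{(r)}$ vanishes. Indeed, Lemma \ref{lem:Cr_compact} yields $r_0 = \sup_{z\in\mathbb T_r}|B(z)| < r$, and as $B$ is holomorphic on $\overline{D_r}$ (its poles $1/\overline{a_i}$ have modulus $>1$), the maximum modulus principle gives $B(\overline{D_r})\subseteq \overline{D_{r_0}}\cc D_r$. Hence for $h_1\in H^2(D_r)$ the function $C^{(r)}h_1 = h_1\circ B$ is holomorphic on $D_r$, its boundary expansion on $\mathbb T_r$ has only nonnegative powers, and therefore $\Prminus C^{(r)}h_1 = 0$. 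An entirely symmetric argument applied to $1/B$ on the exterior disk gives $B(\overline{D^\infty_R})\cc D^\infty_R$, which I will need below. Consequently $\mathcal L'$ is block upper-triangular, $H^2(D_r)\oplus\{0\}$ is invariant, and the standard spectral decomposition for compact block-triangular operators (via the invariant subspace and its quotient) gives
\[
\sigma(\mathcal L') = \sigma\bigl(\Prplus C^{(r)}\bigr)\cup\sigma\bigl(\PRminus C^{(R)}\bigr).
\]

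It then remains to identify the two diagonal blocks as composition operators and to read off their spectra. On the disk, $\Prplus C^{(r)}$ is exactly the composition operator $C_B\colon h\mapsto h\circ B$ on $H^2(D_r)$, compact because $B(D_r)\cc D_r$. Since $D_r\subset\mathbb D$ and, by Proposition \ref{prop:BlaschkeProp}, $z_0$ is the only fixed point of $B$ in $\mathbb D$, the contraction $B|_{D_r}$ has $z_0$ as its unique attracting fixed point, with multiplier $\lambda(z_0)=B'(z_0)$, $|\lambda(z_0)|<1$; the classical spectral theorem for compact composition operators with an interior attracting fixed point (Koenigs linearization) then gives $\sigma(\Prplus C^{(r)}) = \{\lambda(z_0)^n : n\in\mathbb N_0\}\cup\{0\}$. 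For the exterior block I would conjugate by $\varsigma(z)=1/z$ to realize $C^{(R)}$ as a genuine composition operator $\tilde C_B$ with symbol $B$ on $H^2(D^\infty_R)$, compact because $B(D^\infty_R)\cc D^\infty_R$ and with unique fixed point $\hat z_0$ of multiplier $\lambda(\hat z_0)=\overline{\lambda(z_0)}$. Writing $H^2(D^\infty_R) = \mathbb C\oplus\HRnull$, the constants form an invariant subspace on which $\tilde C_B$ acts as the identity (the $n=0$ Koenigs eigenfunction, eigenvalue $1$), and $\PRminus C^{(R)}$ is precisely the compression of $\tilde C_B$ to the complementary summand $\HRnull$; since this removes exactly the eigenvalue $1$, one obtains $\sigma(\PRminus C^{(R)}) = \{\lambda(\hat z_0)^n : n\in\mathbb N\}\cup\{0\}$.

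Combining the two diagonal spectra yields \eqref{eq:specL}. The step I expect to require the most care is the exterior block: one must check that $h_2\circ B$ is holomorphic on all of $D^\infty_R$ even when $B$ has poles there (these are harmless precisely because $h_2$ vanishes at $\infty=B(\text{pole})$), set up the conjugation by $\varsigma$ so that the fixed point and multiplier transform correctly, and justify the index shift from $\mathbb N_0$ to $\mathbb N$ caused by projecting out the constant Koenigs eigenfunction. The remaining ingredients — compactness of the blocks and the block-triangular spectral decomposition — are routine given the preceding sections.
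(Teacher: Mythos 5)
Your overall architecture coincides with the paper's: part (a) from Section~\ref{sec:CircTrans}, passage to $\tr'=J^{-1}\tr^*J$ via Proposition~\ref{prop:adj}, vanishing of the lower-left block (your maximum-modulus justification of $B(\cl{D_r})\cc D_r$ is a nice way to make this explicit), the block-triangular spectral decomposition, and the identification of the disk block as a compact composition operator with spectrum $\{\lambda(z_0)^n: n\in\mathbb{N}_0\}\cup\{0\}$. Where you genuinely diverge is the exterior block. The paper proves $\sigma(\PRminus C^{(R)}_B)=\sigma(C^{(R)}_B)\setminus\{1\}$ by an explicit two-way eigenfunction correspondence: writing $C_B=C^{(R)}_B$ and $\Pi_\pm=\Pi^{(R)}_\pm$, it uses $\Pi_-C_B=\Pi_-C_B\Pi_-$ to pass from an eigenfunction $f$ of $C_B$ (with $\mu\neq 1$) to the eigenfunction $\Pi_-f$ of $\Pi_-C_B$, and conversely manufactures from $\Pi_-C_Bf=\mu f$ the eigenfunction $(1-\mu)f-\Pi_+C_Bf$ of $C_B$. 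You instead block-triangularize $C_B$ itself over $H^2(D^\infty_R)=\mathbb{C}\oplus\HRnull$ and apply the invariant-subspace decomposition a second time. That is a legitimate and arguably cleaner route, but it proves less than you claim: it yields $\sigma(C_B)=\{1\}\cup\sigma(\Pi_-C_B)$, hence only the inclusions $\sigma(C_B)\setminus\{1\}\subseteq\sigma(\Pi_-C_B)\subseteq\sigma(C_B)$. Your assertion that the compression ``removes exactly the eigenvalue $1$,'' i.e.\ that $1\notin\sigma(\Pi_-C_B)$, does \emph{not} follow from the decomposition alone; it is exactly the point the paper settles in a footnote: if $\Pi_-C_Bf=f$ with $f\in\HRnull$, then $f\circ B-f$ is constant, and evaluating at the fixed point $\hat{z}_0$ shows the constant vanishes, so $f\circ B=f$, forcing $f$ to be constant and hence $f=0$. (Alternatively, since the eigenvalue $1$ of $C_B$ is simple and is already accounted for by the diagonal block acting on the constants, a Riesz-projection multiplicity count excludes it from the compression.)

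This gap is, however, harmless for the theorem as stated: since $1=\lambda(z_0)^0$ already lies in $\sigma(C^{(r)}_B)$, either of the two alternatives permitted by your inclusions produces the same union, namely \eqref{eq:specL}. So your argument does establish the theorem at the level of sets, but your intermediate claim $\sigma(\PRminus C^{(R)}_B)=\{\lambda(\hat{z}_0)^n: n\in\mathbb{N}\}\cup\{0\}$ requires the additional fixed-point (or multiplicity) argument above before it can be asserted.
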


\begin{proof}
The first assertion is obvious, as $\tau$ is an analytic expanding circle map
and we can choose $A = A_{r,R}\in \mathcal{A}$ as in \eqref{eq:AA'} such that $\tr$ 
is well defined and compact by the results in Section \ref{sec:CircTrans}.

For the second claim, 
we will use the fact that the spectrum of $\tr$ coincides with that of its 
adjoint $\tr^*$, which 
together with the structure of the representation $\tr'$ of $\tr^*$ 
will allow us to deduce \eqref{eq:specL}. 

We start by observing that for the chosen $A$ we have 
$B(\partial A ) \cap \cl{A} = \emptyset$, 
as well as $B(D_r)\cc D_r$ and $B(D^\infty_R)\cc D^\infty_R$.
It follows that $f\circ B \in H^2(D_r)$ for any $f\in H^2(D_r)$, 
and $f\circ B \in H^2(D^\infty_R)$ for any $f\in H^2(D^\infty_R)$, 
so that $C^{(r)}_Bf = f\circ B$ and $C^{(R)}_Bf=f\circ B$ 
define composition operators
on $H^2(D_r)$ and $H^2(D^\infty_R)$, respectively. 
It is a standard fact that $B(D_r)\cc D_r$ guarantees compactness
of $C^{(r)}_B$ (see, for example, \cite[pp.~128-129]{CowM1995}).  
Similarly for $C^{(R)}_B$.
It is also well known 
(see \cite[Lem.~7.10]{Mayer1991} or \cite[Thm.~7.20]{CowM1995})
that all eigenvalues of a 
compact composition operator $C_\psi$ are simple and are given by the 
non-negative integer powers of the multiplier of the unique attracting 
fixed point of $\psi$.
Hence, 
\[\sigma(C^{(r)}_B) = 
\left \{ \lambda(z_0)^n : n\in\mathbb{N}_0\right \}\cup\{0\}
\]
and 
\[\sigma(C^{(R)}_B) = 
\left \{ \lambda(\hat{z_0})^n :n\in\mathbb{N}_0 \right \}
\cup \{0 \},\]
where $z_0$ and $\hat{z_0}$ are
the unique attracting fixed points of 
$B$ in $D_r$ and $D^\infty_R$, respectively
(see Proposition \ref{prop:BlaschkeProp}).

We now explain how to use these observations to determine the
spectrum of $\tr'$ given in \eqref{eq:blockmatrix}.
Note that $\Pi^{(r)}_+ C^{(r)}_B = C^{(r)}_B$ 
and $\Pi^{(r)}_-C^{(r)}_B = 0$, where $\Pi^{(r)}_+$ and $\Pi^{(r)}_-$
are the projection operators in \eqref{eq:P+P-}.
Thus the operator $\tr'$ is given by
\begin{equation}\label{eq:blockmatrix1}
\tr'=\left(
\begin{array}{cc}
C^{(r)}_B
 & \Pi^{(R)}_+ C^{(R)}_B \\
 0 & \Pi^{(R)}_- C^{(R)}_B \\
\end{array}
\right) .
\end{equation}
In particular, $\tr'$ leaves $H^2(D_r)\oplus \{0\}$ invariant.
The operator $\Pi^{(R)}_- C^{(R)}_B$ is not a composition operator
on $H^2_0(D^\infty_R)$, but we can relate its spectrum to the spectrum of
$C^{(R)}_B$ on $H^2(D^\infty_R)$.
More precisely,
\begin{equation}\label{eq:specC_PC}
\sigma(\PRminus C^{(R)}_B) = \sigma(C^{(R)}_B)\setminus \{1\},
\end{equation}
as we shall see below.
Then, using \eqref{eq:specC_PC} the assertion of the theorem follows,
since 
\begin{eqnarray*}
\sigma(\tr') &=& \sigma(C^{(r)}_B) \cup \sigma(\PRminus C^{(R)}_B) \\
 &=& 
\left \{\lambda(z_0)^n :n\in\mathbb{N}_0\right \} \cup 
\left \{\lambda(\hat{z_0}) ^n :n\in\mathbb{N} \right \} 
\cup \left \{0 \right \},
\end{eqnarray*}
and $\sigma(\tr) = \sigma(\tr^*) = \sigma(\tr')$.

It remains to prove \eqref{eq:specC_PC}. For brevity,
we drop the superscript $(R)$ from $\PRminus$, $\PRplus$ and $C^{(R)}_B$
since we only consider functions in $H^2(D^\infty_R)$ in what follows.
Observe that for $f\in H^2(D^\infty_R)$, we have $(\Pi_+f)(z) = f(\infty)$,
which implies
\begin{align}\label{eq:PCP}
 C_B\Pi_+ = \Pi_+ \quad \text{and} \quad 
 \Pi_-C_B= \Pi_-C_B\Pi_-\,.
\end{align}
Note that $1$ is an eigenvalue of $C_B$ if and only if the corresponding eigenfunction
is constant.
Take $\mu\in \sigma(C_B)$ with $\mu(1-\mu)\neq 1$. 
Since $C_B$ is compact, there is a non-zero 
$f\in H^2(D^\infty_R)$ with 
$C_Bf=\mu f$. The second equality in \eqref{eq:PCP} now implies
$\Pi_-C_B\Pi_- f = \mu \Pi_-f$. But since $\mu \neq 1$ the
eigenvector $f$ is non-constant,
so we have $0\neq\Pi_-f\in H_0^2(D^\infty_R)$ and 
thus $\mu\in \sigma(\Pi_-C_B)$.

To show the converse inclusion, take $\mu \in \sigma(\Pi_-C_B)$ with
$\mu\neq 0$. Since $\Pi_-C_B$ is compact, there is 
a non-zero $f\in H_0^2(D^\infty_R)$
with 
$\Pi_-C_B f= \mu f$. 
First observe that\footnote{
In order to see this, note that otherwise
$\Pi_-C_B f = f$, which implies $f\circ B - f = \operatorname{const}$.
However $f(B(\hat{z_0}))-f(\hat{z_0})=0$, which implies 
$f = f\circ B$. Thus 
$f =\operatorname{const}$, so $\Pi_-C_Bf=0$, contradicting the fact
that $\mu\neq 0$.}
$\mu \neq 1$. Next we note that if $\mu(\mu-1)\neq 0$, then 
$(1-\mu)f-\Pi_+C_Bf\neq 0$ (for otherwise $f$ would be zero). 
Finally, we use 
\eqref{eq:PCP} to show that $(1-\mu)f-\Pi_+C_Bf$ is an eigenfunction
of $C_B$ with eigenvalue $\mu$:
\begin{equation*}
\begin{split}
 C_B \left((1-\mu)f-\Pi_+C_Bf \right) &= (1-\mu)(C_Bf +(\mu f -\Pi_-C_Bf))-C_B\Pi_+C_Bf\\
& =\mu(1-\mu)f+(1-\mu)(I-\Pi_-)C_Bf -\Pi_+C_Bf\\ 
&= \mu \left((1-\mu)f-\Pi_+C_Bf \right).
\end{split}
\end{equation*}
Thus $\sigma(\Pi_- C_B) = \sigma(C_B)\setminus \{1\}$ as claimed.\qedhere
\end{proof}
The following examples illustrate our main result. 
\begin{example}\label{ex:DoublMap}
 The map $B(z)=z^n$ for $n\geq 2$ has two attracting fixed points 
 $z_0 =0$ and $\hat{z_0}=\infty$ with $\lambda(z_0)=\lambda(\hat{z}_0)=0$. 
        Thus $\sigma(\tr) = \{0,1\}$.
\end{example}
Curiously enough, these are not the only examples for which 
$\sigma(\tr)=\{0,1\}$, as the next example shows.  
\begin{example}
Let $B(z) = z^2(z-b)/(1-bz)$ for $b\in (-1,1)$. Then
$B|_{\mathbb{T}}$ is an expanding $3$-to-$1$ circle map. 
As the multipliers of the attracting fixed points of $B$ are
vanishing, just as 
in Example \ref{ex:DoublMap}, we get $\sigma(\tr) = \{0,1\}$. 
\end{example}

\begin{example} For the family of maps  $B(z)=z(\mu - z)/(1-\overline{\mu} z)$
considered in \cite{Nr.2}, the restriction
$B|_{\mathbb{T}}$ is an expanding circle map for any $\mu \in \mathbb{D}$.
The attracting fixed points are $z_0 =0$ and $\hat{z_0}=\infty$ with 
$\lambda(z_0) = \mu$ and $\lambda(\hat{z}_0) = \overline{\mu}$.
Thus  
\[ \sigma(\tr) =  
\left \{\mu^n : n\in\mathbb{N}_0\right \} \cup 
 \{\overline{\mu}^n :n\in\mathbb{N}\} 
\cup \left \{0 \right \}\,.
\]
\end{example}



\end{document}